\documentclass{amsart}
\usepackage{amssymb,amsmath,amsthm,amscd}
\newtheorem{theorem}{Theorem}

\newtheorem{proposition}{Proposition}

\newtheorem{corollary}{Corollary}

\begin{document}

\title{The Cyclic Vector Lemma}
\author{Andy~R. Magid}
\address{Department of Mathematics\\
        University of Oklahoma\\
        Norman OK 73019\\
 }
 \email{amagid@ou.edu}
\subjclass{12H05}
\date{December 8, 2022}
\maketitle

\begin{abstract}
Let $F$ be a differential field of characteristic zero with algebraically closed constant field $C$. Let $E$ be a Picard--Vessiot closure of $F$, $R \subset E$ its Picard--Vessiot ring and $\Pi$ the differential Galois group of $E$ over $F$. Let $V$ be a differential $F$ module, finite dimensional as an $F$ vector space. Then $V$ is singly generated as a differential $F$ module if and only if there is a $\Pi$ module injection $\text{Hom}_F^\text{diff}(V,R) \to R$. If $C \neq F$ such an injection always exists.
\end{abstract}

Let $F$ be a differential field of characteristic zero with algebraically closed constant field $C$. Denote the derivation of $F$ by $D_F$, or just $D$ if the context is clear. A differential $F$ module $V$ is an $F$ vector space with an additive endomorphism $D_V$, or just $D$ is the context is clear, satisfying $D_V(av)=D_F(a)v+ aD_V(v)$ for all $a \in F$ and $v \in V$. If there is a $v \in V$ such that $V=\{D^nv+a_{n-1}D^{n-1}v+ \dots +a_0v \vert \text{ for all } n \geq 0 \text{ and for all } a_0, \dots, a_{n-1} \in F\}$ then $V$ is said to be singly generated, and $v$ is said to be a cyclic vector for $V$. It is a theorem that if $V$ is finite dimensional and $F \neq C$ then cyclic vectors aways exist.  This seems to be originally due to Cope \cite[p. 132]{c} for systems of linear differential equations with coefficients in $F=\mathbb C(x)$; a modern proof for modules over differential rings was given by Katz \cite[Thm. 1., p. 65]{k} (reproduced in \cite[Prop. 2.9, p. 40]{vps}). The proof of Katz is brief and elegant. Nonetheless, it may be useful to provide another, perhaps more conceptual, argument, which this paper does, that can give some inclination of why cyclic vectors should exist when $F \neq C$, and what goes wrong when $F=C$.

A differential homomorphism between differential $F$ modules $V$ and $W$ is an $F$ linear homomorphism $f:V \to W$ satisfying $fD_V=D_Wf$. With this definition differential modules form an abelian category. 

The set of all differential homomorphisms  $V \to W$ is denoted $\text{Hom}_F^\text{diff}(V,W)$. Note that this latter is a $C$ vector space.
Let $E$ be a Picard--Vessiot closure of $F$, $R \subset E$ its Picard--Vessiot ring and $\Pi$ the differential Galois group of $E$ over $F$. Since $R$ is a differential $F$ module, for any differential $F$ module $V$ we can form $\text{Hom}_F^\text{diff}(V,R)$. The group $\Pi$ acts on $R$ and hence on $\text{Hom}_F^\text{diff}(V,R)$ making the latter a $\Pi$ module. Thus we can consider $\text{Hom}_\Pi( \text{Hom}_F^\text{diff}(V,R),R)$. As in all such ``double-dual" situations, we then have a map $V \to \text{Hom}_\Pi\text{Hom}_F^\text{diff}(V,R)$ by $v \mapsto \hat{v}$ where for $f \in  \text{Hom}_F^\text{diff}(V,R)$ we define $\hat{v}(f)=f(v)$. (We omit the calculation that $\hat{v}$ is $\Pi$ equivariant.). Then by \cite{m}

\begin{proposition}\label{tannaka}
If $V$ is a differential $F$ module finite dimensional of dimension $n$ as an $F$ vector space, then 
\begin{enumerate}
\item[(a)] $\text{Hom}_F^\text{diff}(V,R)$ is a rational $\Pi$ module of dimension $n$ over $C$\\
\item[(b)]  $V \to \text{Hom}_\Pi( \text{Hom}_F^\text{diff}(V,R),R)$ by $v \mapsto \hat{v}$ is an isomorphism of differential $F$ modules\\
\end{enumerate}
\end{proposition}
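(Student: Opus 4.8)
The plan is to pass to the dual differential module, use the structure of the Picard--Vessiot closure to trivialize that module after base change to $R$, and then read off both assertions by dualizing the trivialization and taking invariants. As a preliminary, for a finite dimensional differential $F$ module $M$ equip the $F$ dual $M^{*}=\text{Hom}_F(M,F)$ with the derivation $(D\phi)(m)=D(\phi(m))-\phi(Dm)$; then the canonical $F$ linear isomorphism $\text{Hom}_F(M,R)\cong M^{*}\otimes_F R$ is one of differential $F$ modules, and an $F$ linear $f\colon M\to R$ is a $D$ constant on the left precisely when $fD_M=D_R f$, i.e.\ precisely when $f$ is a differential homomorphism. Hence $\text{Hom}_F^\text{diff}(M,R)=(M^{*}\otimes_F R)^{D}$. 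Put $W:=\text{Hom}_F^\text{diff}(V,R)=(V^{*}\otimes_F R)^{D}$, with $\Pi$ acting through the $R$ factor, and note $\dim_F V^{*}=n$.

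Next I use the standard structure of $R$: it is a simple differential $F$ algebra with $R^{D}=C$ and $R^{\Pi}=F$, it is the directed union of the Picard--Vessiot rings of the linear differential equations over $F$, and hence it is a rational $\Pi$ module. Moreover $R$ ``solves'' every finite dimensional differential $F$ module $M$: the Picard--Vessiot ring of $M$ is a differential $F$ subalgebra of $R$ that already trivializes $M$, and since $R^{D}=C$ it follows that the evaluation map $(M\otimes_F R)^{D}\otimes_C R\to M\otimes_F R$ is an isomorphism of $(\Pi,R)$ modules with $\dim_C(M\otimes_F R)^{D}=\dim_F M$. Taking $M=V^{*}$ shows $W$ has dimension $n$ over $C$, and $W$ is a rational $\Pi$ module, being a finite dimensional $\Pi$ submodule of the rational $\Pi$ module $V^{*}\otimes_F R$; this proves (a). It also provides the $(\Pi,R)$ equivariant trivialization
\[
\iota\colon W\otimes_C R \;\xrightarrow{\ \sim\ }\; V^{*}\otimes_F R
\]
that drives the rest.

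For (b), apply $\text{Hom}_R(-,R)$ to $\iota$. Since $W$ is finite dimensional over $C$, $V$ over $F$, and $V^{**}=V$, this yields a $(\Pi,R)$ equivariant isomorphism
\[
\text{Hom}_R(W\otimes_C R,R)\;\cong\;\text{Hom}_R(V^{*}\otimes_F R,R),\qquad\text{i.e.}\qquad \text{Hom}_C(W,R)\;\cong\;V\otimes_F R,
\]
with $\Pi$ acting diagonally on the left and through the $R$ factor on the right. Taking $\Pi$ invariants turns the left side into $\text{Hom}_C(W,R)^{\Pi}=\text{Hom}_\Pi(W,R)$ and the right side into $(V\otimes_F R)^{\Pi}=V\otimes_F R^{\Pi}=V$ (choosing an $F$ basis of $V$ exhibits $V\otimes_F R$ as $R^{n}$ with $\Pi$ acting coordinatewise). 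Hence $\text{Hom}_\Pi(W,R)\cong V$. Chasing the identifications, $v\otimes 1\in V\otimes_F R$ corresponds to the functional $g\mapsto g(v)$ on $\text{Hom}_F(V,R)$, and restricting this functional along $\iota$ to $W$ gives $w\mapsto w(v)$; so the isomorphism $V\to\text{Hom}_\Pi(W,R)$ just produced is exactly $v\mapsto\hat v$. It respects derivations: giving $\text{Hom}_C(W,R)$ the derivation $(D\psi)(w)=D(\psi(w))$, one computes $(D\hat v)(w)=D(w(v))=w(D_V v)=\widehat{D_V v}(w)$, the middle equality holding because each $w\in W$ is a differential homomorphism. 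An $F$ linear bijection intertwining the derivations is an isomorphism of differential $F$ modules, which is (b).

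The only non-formal ingredient is the input used in the second paragraph: that $R$ carries a full complement of horizontal sections for every finite dimensional differential module --- so that base change to $R$ trivializes it --- together with $R^{\Pi}=F$ and the rationality of $R$ as a $\Pi$ module. This is exactly what Picard--Vessiot theory for the closure $E/F$ provides (cf.\ \cite{m}). The remaining steps are formal: duality, extension of scalars, and invariants, the one point worth noting being $(V\otimes_F R)^{\Pi}=V\otimes_F R^{\Pi}$ because $V$ is $F$ free, plus a short diagram chase identifying the resulting isomorphism with $v\mapsto\hat v$.
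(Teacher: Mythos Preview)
Your argument is correct. The paper's own proof of this proposition is simply a pair of citations to \cite{m} (Prop.~3.6 and Thm.~3.7), so there is no argument in the paper to compare yours against beyond noting that you have supplied one. What you have written is essentially the standard Tannakian mechanism behind those cited results: identify $\text{Hom}_F^\text{diff}(V,R)$ with the horizontal sections $(V^{*}\otimes_F R)^D$, use that the Picard--Vessiot ring of the closure trivializes every finite dimensional differential module to get $W\otimes_C R\cong V^{*}\otimes_F R$, then dualize over $R$ and take $\Pi$ invariants using $R^{\Pi}=F$. The only inputs beyond linear algebra are exactly the structural facts about $R$ (simplicity, $R^D=C$, $R^{\Pi}=F$, rationality, and that $R$ contains a full solution space for each equation), which is what the reference \cite{m} packages. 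Your verification that the resulting isomorphism is $v\mapsto\hat v$ and commutes with the derivations is clean; the step $(V\otimes_F R)^{\Pi}=V\otimes_F R^{\Pi}$ is justified precisely as you say, by choosing an $F$ basis of $V$. In short: the paper outsources the proof, and your write-up is a legitimate way to pay that debt.
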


\begin{proof}
\begin{enumerate}
\item[(a)] \cite[Prop. 3.6, p382]{m}\\
\item[(b)] \cite[Thm. 3.7 1., p382]{m}\\
\end{enumerate}
\end{proof}

These results imply a relative cyclic vector theorem:

\begin{theorem} \label{relativecyclicvector} Let $V$ be a finite dimensional differential $F$ module. Suppose that there is a $\Pi$ module injection $\phi: \text{Hom}_F^\text{diff}(V,R) \to R$. Let $v \in V$ such that $\hat{v}=\phi$ then $v$ is a cyclic vector for $V$. Conversely, let $v \in V$ be a cyclic vector. Then $\hat{v}: \text{Hom}_F^\text{diff}(V,R) \to R$ is a $\Pi$ module injection.
\end{theorem}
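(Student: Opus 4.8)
The plan is to analyse, for an arbitrary $v\in V$, the kernel of the map $\hat v\colon \text{Hom}_F^\text{diff}(V,R)\to R$, and to show that it vanishes exactly when $v$ is a cyclic vector; both assertions of the theorem then drop out, using Proposition~\ref{tannaka}(b) to pass between $\phi$ and $v$. Write $V_0\subseteq V$ for the differential submodule generated by $v$, that is, $V_0=\text{span}_F\{D^kv : k\ge 0\}$, which one checks is an $F$- and $D$-stable subspace. By the definition of singly generated, $v$ is a cyclic vector for $V$ if and only if $V_0=V$.

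The key step is the identification $\ker\hat v\cong \text{Hom}_F^\text{diff}(V/V_0,R)$. Indeed, if $f\in \text{Hom}_F^\text{diff}(V,R)$ satisfies $f(v)=0$, then since $f$ intertwines the derivations $f(D^kv)=D^kf(v)=0$ for all $k$, so $f$ vanishes on all of $V_0$ and factors uniquely as $f=g\circ\pi$, where $\pi\colon V\to V/V_0$ is the quotient and $g\in\text{Hom}_F^\text{diff}(V/V_0,R)$; conversely any such $g$ yields $g\circ\pi\in\ker\hat v$. This correspondence is $C$-linear. Now $V/V_0$ is again a finite dimensional differential $F$ module, so Proposition~\ref{tannaka}(a) applies to it and gives
\[
\dim_C \ker\hat v \;=\; \dim_C \text{Hom}_F^\text{diff}(V/V_0,R)\;=\;\dim_F(V/V_0).
\]
Hence $\hat v$ is injective precisely when $V/V_0=0$, i.e.\ precisely when $v$ is a cyclic vector.

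With this in hand the theorem is immediate. If $v$ is cyclic, then $\hat v$ is injective by the above and is $\Pi$ equivariant (as recorded before Proposition~\ref{tannaka}), hence a $\Pi$ module injection. Conversely, given a $\Pi$ module injection $\phi$, Proposition~\ref{tannaka}(b) furnishes $v\in V$ with $\hat v=\phi$; since $\hat v$ is then injective, the displayed equality forces $\dim_F(V/V_0)=0$, so $V_0=V$ and $v$ is cyclic.

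I do not expect a genuine obstacle here. The two points that need a little attention are that $f(v)=0$ forces $f$ to vanish on the whole differential submodule $V_0$ — this is exactly where one uses that $f$ is a differential, not merely an $F$ linear, homomorphism — and the bookkeeping that $V/V_0$ is still finite dimensional as an $F$ vector space, so that Proposition~\ref{tannaka}(a) may legitimately be invoked for it.
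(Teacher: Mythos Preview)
Your proof is correct and follows essentially the same approach as the paper: both define $V_0$ as the differential submodule generated by $v$, observe that $f(v)=0$ forces $f$ to vanish on $V_0$ (hence factor through $V/V_0$), and invoke Proposition~\ref{tannaka}(a) on $V/V_0$ to conclude that $\hat v$ is injective if and only if $V_0=V$. Your version is organized around the identification $\ker\hat v\cong\text{Hom}_F^\text{diff}(V/V_0,R)$ and a dimension count, whereas the paper argues the forward direction by contradiction, but the content is the same.
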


\begin{proof} Since $\phi$ is injective, if $f \in \text{Hom}_F^\text{diff}(V,R)$ and $\phi(f)=0$ then $f=0$. Since $\phi=\hat{v}$ and $\hat{v}(f)=f(v)$, this says that $f(v)=0$ implies $f=0$. Now let $V_0 \subseteq V$ be $V_0=\{D^nv+a_{n-1}D^{n-1}v+ \dots +a_0v \vert \text{ for all } n \geq 0 \text{ and for all } a_0, \dots, a_{n-1} \in F\}$. Suppose $V_0 \subsetneq V$ and let $W=V/V_0$. The surjection $V \to W$ is, of course, differential. Since $W$ has positive $F$ dimension, by Proposition \ref{tannaka} (a)  $\text{Hom}_F^\text{diff}(W,R)$ has positive $C$ dimension, and is in particular not zero. Let $g: W \to R$ be a non-zero differential morphism. Then $f=gp:V \to R$ is a non-zero differential homomorphism with $f(v)=0$. This contradiction shows that $V=V_0$, namely that $v$ is a cyclic vector for $V$.

Now suppose $v \in V$ is a cyclic vector. This means that $v$ generates $V$ as a differential $F$ module. Therefore if $f: V \to R$ is a differential $F$ homomorphism, $f=0$ if and only if $f(v)=0$. Thus $\hat{v}: \text{Hom}_F^\text{diff}(V,R) \to R$ is an injection.
\end{proof}

Theorem \ref{relativecyclicvector} shows that finding cyclic vectors is equivalent to finding injections from the $C$ finite dimensional rational $\Pi$ module $\text{Hom}_F^\text{diff}(V,R)$ to $R$. We consider this possibility for any $C$ finite dimensional rational $\Pi$ module:

\begin{theorem} \label{embedmodule}
Let $W$ be a $C$ finite dimensional rational $\Pi$ module. 
\begin{enumerate}
\item[(a)] If $C \subsetneq F$ then there exists a $\Pi$ module injection $W \to R$.\\
\item[(b)] If $F=C$ then there is a $\Pi$ module injection $W \to R$ if and only if $W$ is isomorphic to a $\Pi$ submodule of $C[\Pi]$.\\
\end{enumerate}
\end{theorem}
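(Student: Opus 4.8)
The plan is to reduce everything to a finite level and then use the torsor structure of Picard--Vessiot rings. Write $\Pi=\varprojlim\Pi_i$ as the inverse limit of its finite dimensional algebraic quotients, and for each $i$ let $R_i\subseteq R$ be the Picard--Vessiot ring of the intermediate Picard--Vessiot extension of $F$ with group $\Pi_i$; thus $R_i$ is a rational $\Pi_i$ module, $R=\bigcup_iR_i$, and $\text{Spec}(R_i)\to\text{Spec}(F)$ is a $\Pi_i$ torsor. Since $W$ is finite dimensional it is already a rational $\Pi_i$ module for some $i$, and a $\Pi_i$ module injection $W\to R_i$ is in particular a $\Pi$ module injection $W\to R$; conversely any $\Pi$ module injection $W\to R$ has image inside some $R_i$. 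So for (a) it suffices to produce a $\Pi_i$ module injection $W\hookrightarrow R_i$, and for (b) it suffices to compare when $W$ embeds in some $R_i$ with when it embeds in some $C[\Pi_i]$, using that $C[\Pi_i]\hookrightarrow C[\Pi]$ by pullback along $\Pi\to\Pi_i$.

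For (b) this reduction already finishes the argument: $\text{Spec}(R_i)\to\text{Spec}(F)=\text{Spec}(C)$ is a $\Pi_i$ torsor over an algebraically closed field, hence trivial, so $R_i\cong C[\Pi_i]$ as $\Pi_i$ modules. Therefore ``$W$ embeds in some $R_i$'' and ``$W$ embeds in some $C[\Pi_i]$'' are literally the same condition, which is the content of (b).

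For (a) I would establish two facts about the torsor $\text{Spec}(R_i)\to\text{Spec}(F)$. First, $R_i$ is injective as a rational $\Pi_i$ module: the torsor isomorphism identifies $R_i\otimes_FR_i$, with $\Pi_i$ acting through the moving tensor factor, with $R_i\otimes_CC[\Pi_i]$, and the latter, being a direct sum of copies of the injective $\Pi_i$ module $C[\Pi_i]$, is injective; the map $r\mapsto 1\otimes r$ exhibits $R_i$ as a $\Pi_i$ equivariant direct summand (a retraction being $a\otimes b\mapsto\lambda(a)b$ for any $F$ linear $\lambda\colon R_i\to F$ with $\lambda(1)=1$), so $R_i$ is injective as well. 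Second, by descent along the torsor, for every finite dimensional rational $\Pi_i$ module $W$ one has a $\Pi_i$ equivariant isomorphism $W\otimes_CR_i\cong (W\otimes_CR_i)^{\Pi_i}\otimes_FR_i\cong R_i^{\dim_CW}$, the middle term being an $F$ space of dimension $\dim_CW$ by comparing ranks over $R_i$. Taking $W=S$ simple, and using that $s\mapsto s\otimes r$ for $r\in R_i^{\Pi_i}=F$ gives $\dim_CF$ linearly independent elements of $\text{Hom}_{\Pi_i}(S,S\otimes_CR_i)\cong\text{Hom}_{\Pi_i}(S,R_i)^{\dim_CS}$, one obtains $(\dim_CS)(\dim_C\text{Hom}_{\Pi_i}(S,R_i))\geq\dim_CF$. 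But $C\subsetneq F$ with $C$ algebraically closed forces $\dim_CF=\infty$, so $\dim_C\text{Hom}_{\Pi_i}(S,R_i)=\infty$ for every simple $S$: every simple occurs in the socle of $R_i$ with infinite multiplicity. Hence, for a finite dimensional rational $\Pi_i$ module $W$, the finite dimensional semisimple module $\text{soc}(W)$ embeds into $R_i$; since $R_i$ is injective this extends to a $\Pi_i$ homomorphism $W\to R_i$, which is injective because a nonzero submodule of $W$ has nonzero intersection with $\text{soc}(W)$. This yields the required injection $W\hookrightarrow R_i\subseteq R$.

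The step I expect to be the main obstacle is the injectivity of $R_i$ in (a): correctly pinning down which tensor factor carries the $\Pi_i$ action in the torsor isomorphism $R_i\otimes_FR_i\cong R_i\otimes_CC[\Pi_i]$, and which translation action makes $C[\Pi_i]$ injective. A subsidiary point is that the argument uses throughout that the category of rational $\Pi_i$ modules is locally finite, so that socles exist and behave well and direct sums of injectives remain injective; this is standard for affine algebraic groups in characteristic zero. The remaining ingredients — the torsor property of Picard--Vessiot rings, triviality of torsors over algebraically closed fields, injectivity of the regular representation, and descent along a torsor — are standard Picard--Vessiot and algebraic group facts, as in \cite{m}.
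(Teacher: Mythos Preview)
Your proof is correct but takes a genuinely different route for part (a). The paper invokes a structural result from \cite{cjm}: with $H=\ker(\Pi\to\text{GL}(W))$ and $G=\Pi/H$, the Picard--Vessiot ring $S=R^H$ is isomorphic to $F\otimes_C C[G]$ as an $F$ vector space and $G$ module (not necessarily as an $F$ algebra). Given this, the coaction $\nabla_W:W\hookrightarrow W_t\otimes C[G]$, composed with any $C$ linear injection $W_t\hookrightarrow F$ (available since $\dim_CF=\infty$ and $G$ acts trivially on both sides), yields $W\hookrightarrow F\otimes_C C[G]\cong S\subseteq R$ in one stroke. You instead work from the torsor axioms alone to show that $R_i$ is injective as a rational $\Pi_i$ module and that every simple occurs in it with infinite multiplicity, then embed $\text{soc}(W)$ and extend. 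Your argument is more self-contained and makes transparent why $C\subsetneq F$ is the operative hypothesis; the paper's is much shorter because the cited isomorphism $S\cong F\otimes_C C[G]$ already packages both of your facts at once (injectivity of $S$ and infinite multiplicities are immediate once $S$ is a direct sum of copies of $C[G]$ indexed by a $C$ basis of $F$). For (b) the two arguments are essentially the same.
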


\begin{proof} The action of $\Pi$ on $W$ induces a homomorphism $\Pi \to \text{GL}(W)$ whose kernel $H$ is a (normal) proalgebraic and whose quotient $G=\Pi/H$ is an affine algebraic group. Under the action of $\Pi$ on $W,$ $H$ acts trivially, so $W$ is a (finite dimensional) $G$ module.  The action of $G$ on $W$ defines a map $\nabla_W:W \to W_t \otimes C[G]$ by $w \mapsto \sum w_i \otimes h_i$ where for all $g \in G$ we have $g\cdot w= \sum h_i(g)w_i$. Here $W_t$ means the vector space $W$ with trivial $G$ action. One verifies that $\nabla_W$ is $G$ linear and injective. Let $K=E^H$. Then $K$ is a Picard--Vessiot extension of $F$ with differential Galois group $G$. The Picard--Vessiot ring $S$ of $K$ is $R^H$, however all we need to know is that $S \subseteq R$. By \cite[Prop. 2.2, p. 235]{cjm}, $S$ is isomorphic to $F \otimes_C C[G]$ as an $F$ vector space and $G$ module. As a $G$ module $F$ has trivial action. Since $C$ is algebraically closed, if $C \subsetneq F$ then $F$ is infinite dimensional over $C$ and hence there is a ($G$ module) injection $W_t \to F$; thus $\nabla_W$ induces a $G$ module injection $W \to W_t \otimes C[G] \to F \otimes C[G]$ and the latter is isomorphic as an $F$ vector space and $G$ module to $S \subset R$. This proves assertion (a). 

If $W^H=W$,  any $\Pi$ module homomorphism, injective or not, from $W$ to $R$ has image in $R^H$, and the latter is isomorphic to $F \otimes_C C[G]$ as an $F$ vector space and $G$ module. When $F=C$, $R^H$ is then isomorphic to $C[G]=C[\Pi]^H$. Under these identifications, (b) is a tautology.

Combining Theorems \ref{relativecyclicvector} and \ref{embedmodule} we have the cyclic vector theorem:

\begin{corollary} \label{cyclicvectortheorem}
Assume $F \neq C$. Then every finite dimensional differential $F$ module has a cyclic vector.
\end{corollary}

\end{proof}


\begin{thebibliography}{9999}
 
 \bibitem{cjm} Chinberg, T., Juan, L., and Magid, A. \emph{Picard--Vessiot extensions with specified Galois group}, Pac. J. Math. \textbf{243} (2009), 233-242
 
 \bibitem{c} Cope, F.T., \emph{Formal systems of irregular linear differential equations. Part II}, Amer. J. Math \textbf{ 58} (1936), 130-140
 
 \bibitem{k} Katz, N. \emph{A simple algorithim for cyclic vectors}, Amer. J. Math. \textbf{109} (1987), 65-70
 
 \bibitem{m} Magid, A. \emph{Universal covers and category theory in polynomial and differential Galois theory} in  \emph{Galois theory, Hopf algebras, and semiabelian categories}, Fields Inst. Commun., \textbf{43}, Amer. Math. Soc., Providence, RI, 2004, 373-385
 
 \bibitem{vps} van der Put, M. and Singer, M. \emph{Galois Theory of Linear Differential Equations}, Springer, Berlin, 2003


\end{thebibliography}
\end{document}